\documentclass[12pt]{amsart}
\usepackage{latexsym}
\usepackage{enumitem}
\usepackage{graphicx}
\usepackage{subfig}
\usepackage{float}
\usepackage{relsize}
\usepackage[top=3.2cm,bottom=3.8cm,left=3cm,right=2cm]{geometry}
\usepackage{mathrsfs}
\usepackage{amssymb}
\usepackage{amsbsy}
\usepackage{amsmath}
\usepackage{CJK}
\usepackage{tikz}
\usepackage{bm}
\usepackage{xcolor}
\usepackage[colorlinks,linkcolor=blue,citecolor=blue,pagebackref]{hyperref}
\renewcommand {\thefootnote}{\fnsymbol{footnote}}

\def \dfrac {\displaystyle \frac}

\newtheorem{thm}{Theorem}[section]
\newtheorem{lem}[thm]{Lemma}
\newtheorem{cor}[thm]{Corollary}

\theoremstyle{definition}

\def\r{\mathbb{R}^n}
\def\s{\mathbb{S}^{n-1}}

\newcommand{\Sch}{\mathcal{S}}
\newcommand{\V}{\operatorname{vol}}
\newcommand{\dd}{\mathop{}\!\mathrm{d}}

\def\R{\mathrm{R}}

\allowdisplaybreaks[4]

\numberwithin{equation}{section}

\newcommand\nnfootnote[1]{%
  \begin{NoHyper}
    \renewcommand\thefootnote{}\footnote{#1}%
    \addtocounter{footnote}{-1}%
  \end{NoHyper}
}

\makeatletter
\SetLabelAlign{hang}{%
  #1%
  \aftergroup\adjustparshapeindent}
\newcommand*\adjustparshapeindent{%
  \@ifnextchar\egroup
    {\aftergroup\adjustparshapeindent}
    {\adjustparshapeindent@auxi}}
\newcommand*\adjustparshapeindent@auxi{%
  \unless\ifdim\wd\@tempboxa=\labelwidth
    \adjustparshapeindent@auxii
  \fi}
\newcommand*\adjustparshapeindent@auxii{%
  \dimen@ = \dimexpr\wd\@tempboxa-\labelwidth\relax
  \labelwidth = \wd\@tempboxa
  \advance\linewidth -\dimen@
  \advancemargin \dimen@
  \advance\@totalleftmargin \dimen@
  \parshape \@ne \@totalleftmargin \linewidth}
\makeatother

\begin{document}
\begin{center}
  {\large \bf An affirmative solution to the
  generalized Busemann--Petty problem with subspace dimensions $2$ and $3$}
\end{center}

\vskip 10pt

\begin{center}
  {\bf Cheng\enspace Lin\ \quad\quad \  Yu-de\enspace Liu\ \quad\quad \ Ge\enspace Xiong}\\~~ \\
  \small{School of Mathematical Sciences, Key Laboratory of Intelligent Computing and Applications (Ministry of Education), Tongji University, Shanghai, 200092, China}
\end{center}

\vskip 5pt

\nnfootnote{E-mail addresses: 1. lcbruce@foxmail.com;\ 2. liuyude@fudan.edu.cn;\  3. xiongge@tongji.edu.cn.}

\begin{center}
  \begin{minipage}{14cm}
  {\bf Abstract:}
  The generalized Busemann--Petty problem asks whether origin-symmetric convex bodies in $\r$ having larger volume of all $m$-dimensional sections necessarily have larger volume. When $m\geq 4$, this is known to be false, but the cases $m=2, 3$ for $n\geq 5$ have remained open since the 1990s. In this paper, we resolve these cases. 
Together with the known results, the
generalized Busemann--Petty problem is completely solved: the answer  is affirmative for $m=1, 2, 3$, and negative for $m\geq 4$.

  \vskip 3pt{{\bf 2020 Mathematics Subject Classification:} 52A38,   46B07, 42B10.}
  \vskip 3pt{{\bf Keywords:}  Busemann--Petty problem,  Funk--Radon transform, convex body}
  \end{minipage}
\end{center}

\begin{CJK*}{UTF8}{gbsn}
\vskip 20pt
\section{\bf Introduction}
\label{1}
\vskip 10pt

The setting of this article is the $n$-dimensional Euclidean space $\r$. Let $B_n$ and $\s$ denote the unit ball and the unit sphere in $\r$, respectively.  A \emph{convex body} in $\r$ is a compact convex set with nonempty interior. Denote by $\V_m(\cdot)$ the $m$-dimensional Lebesgue measure. Write 
$G(n,m)$ for the Grassmann manifold of $m$-dimensional linear subspaces of
$\mathbb{R}^n$. Throughout this article, assume that the integer $m\in \{1,2,\ldots,n-1\}$.  

The classical \emph{Busemann--Petty problem} \cite{BP}, which resulted
from reformulating a problem in Minkowskian geometry, asks:
 
$\mathbf{(BP).}$ If $K$ and $L$ are origin-symmetric convex bodies in $\r$, is there the implication 
   \begin{align*}
        \V_{n-1}(K\cap E)\leq \V_{n-1}(L\cap E),\quad \forall\thinspace E\in G(n,n-1)\quad\Longrightarrow\quad \V_n(K)\leq \V_n(L)?
    \end{align*}  

The Busemann--Petty problem has a long and influential history. A negative answer to the problem for $n\ge 5$ was established in a sequence of papers by Larman and Rogers \cite{LR} (for $n\geq 12$), Ball \cite{Ball} ($n\geq 10$), Giannopoulos \cite{Gi} and Bourgain \cite{Bo} $(n \geq 7)$, Papadimitrakis \cite{Pa} and Gardner \cite{Gardner}  $(n \geq 5)$.  In 1988, Lutwak \cite{Lutwak2} took the first step towards a \emph{full} solution by introducing the notion of \emph{intersection body}. It turns out that the Busemann--Petty problem is equivalent to asking whether all origin-symmetric convex bodies in $\r$ are intersection bodies (Lutwak himself proved one direction of the equivalence in \cite{Lutwak2}, and the other was shown independently by Gardner \cite{Gardner} and Zhang \cite{Zhang1}). Following this route, Gardner \cite{Gardner1} and Zhang \cite{Zhang} answered affirmatively  the Busemann--Petty problem for $n=3$ and $n=4$, respectively. A unified solution was obtained by   Gardner,  Koldobsky and  Schlumprecht \cite{GKS}, using Koldobsky's functional analysis approach \cite{Ko}.  We  refer readers to \cite[Note 8.9]{GardnerBook} and \cite[pp. 3--5]{Ko2} for
more historical comments.

Since the answer to the Busemann--Petty problem is negative for $n\geq 5$, Milman and Pajor \cite{MilmanPajor1989} posed the \emph{isomorphic Busemann--Petty problem}:
 Does there exist an absolute constant $c>0$ such that for all $n$ and all pairs of origin-symmetric convex bodies $K,L$ in $\mathbb{R}^n$, the following implication holds
   \begin{align*}
        \V_{n-1}(K\cap E)\leq \V_{n-1}(L\cap E),\quad \forall\thinspace E\in G(n,n-1)\quad\Longrightarrow\quad \V_n(K)\leq c\V_n(L)?
    \end{align*} 
 It is worth mentioning  (see \cite{MilmanPajor1989}) that the isomorphic BP problem is equivalent to the celebrated Bourgain slicing problem \cite{Bourgain1986}, which was resolved by Klartag and Lehec \cite{KlartagLehec2025}  using Guan's bound \cite{Guan2024}. Very recently, Koldobsky and Zvavitch \cite{KoldobskyZvavitch2026} proved the sharp order of the constant involved in the isomorphic BP problem for arbitrary measures.  
\vskip 3pt
In \cite{Zhang96}, Zhang posed a natural generalization of the Busemann--Petty problem,  known as the \emph{generalized Busemann--Petty problem} (see also \cite[Problem 8.2]{GardnerBook}).
\vskip 3pt
$\mathbf{(GBP).}$ If $K$ and $L$ are origin-symmetric convex bodies in $\r$, is there the implication 
    \begin{align*}
        \V_m(K\cap E)\leq \V_m(L\cap E),\quad \forall\thinspace E\in G(n,m)\quad\Longrightarrow\quad \V_n(K)\leq \V_n(L)?
    \end{align*}
    
   It is obvious that if $m=1$, the GBP problem is trivially true; if $m=n-1$, the GBP problem is indeed the classical BP problem.  In  \cite{Zhang96}, Zhang  introduced the notion of  \emph{$m$-intersection body} and proved that the GBP  problem is equivalent to asking
whether all origin-symmetric convex bodies are  $m$-intersection bodies. 
In 1999,  it was shown by Bourgain and Zhang \cite{BZ} (see also a correction by Rubin and Zhang \cite{RZ}), and later by Koldobsky 
\cite{Ko1}, that the answer to the GBP problem is negative for $m\geq 4$. Please refer to, e.g., \cite{BZ}, \cite{GZ},  \cite{Mi1}, \cite{Mi}, \cite{Mi2}, \cite{RZ}, \cite{Ru} and \cite{Zhang96} for partial solutions and work relevant to the GBP  problem.  
 Until now,  the GBP problem for $m=2, 3$ remained open. 
 \vskip 3pt
 In this article, the long-standing GBP problem is definitively resolved.

\begin{thm}
\label{1.1}
Let $K$ and $L$ be origin-symmetric convex bodies in $\r$. If $n\geq 4$ and $m=2, 3$, then
    \begin{align*}
        \V_m(K\cap E)\leq \V_m(L\cap E),\quad \forall\thinspace E\in G(n,m)\quad\Longrightarrow \quad\V_n(K)\leq \V_n(L).
    \end{align*}
\end{thm}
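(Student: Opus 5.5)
The plan is to go through Zhang's equivalence \cite{Zhang96}: it suffices to show that every origin-symmetric convex body $K$ in $\r$ is an $m$-intersection body when $m=2,3$. I would use the Fourier-analytic reformulation due to Koldobsky \cite{Ko1} and Milman \cite{Mi1}. It says that $K$ is an $m$-intersection body (equivalently, a $k$-intersection body with $k=n-m$) if and only if $\|x\|_K^{-n+k}$, i.e. $\|x\|_K^{-m}$, is a positive definite distribution on $\r$. In other words, the Fourier transform $(\|\cdot\|_K^{-m})^\wedge$ should be a nonnegative distribution, and by homogeneity this reduces to a nonnegative measure on $\s$. We may first approximate $K$ by bodies with smooth boundary and positive curvature. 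The class of $m$-intersection bodies is closed under this limit, and the section inequalities pass to the limit, so it is enough to treat $\|\cdot\|_K\in C^\infty(\r\setminus\{0\})$. In that case $(\|\cdot\|_K^{-m})^\wedge$ is a continuous homogeneous function of degree $m-n$.

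The key step is a formula for $(\|\cdot\|_K^{-m})^\wedge(\xi)$ that makes its sign visible when $m\le 3$. For $m=1$ one has $(\|x\|_K^{-n+1})^\wedge(\xi)=c\,\V_{n-1}$-type quantities. More precisely, Koldobsky's formula expresses $(\|x\|_K^{-n+q})^\wedge(\xi)$ for integer $q<n$ through the derivatives of order $q-1$ of the parallel section function $A_{K,\xi}(t)=\V_{n-1}(K\cap\{x\cdot\xi=t\})$ at $t=0$. Here we need the exponent $-m$, i.e. $-n+(n-m)$, so I would instead pass to the central $(n-m)$-codimensional picture. Write $(\|\cdot\|_K^{-m})^\wedge$ as an integral over $\s\cap\xi^\perp$ of a fractional derivative of order $m-1$ of the one-dimensional function $t\mapsto\int_{\{x\cdot\xi=t\}}\|x\|_K^{-m}$. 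Alternatively, use the dual form: $(\|x\|_K^{-m})^\wedge(\xi)=c_{n,m}\,R^{*}$-type averages of $\|\cdot\|_K^{-m}$ over $m$-dimensional subspaces containing... Concretely, I would take $m=2$ and $m=3$ and restrict to two- and three-dimensional central subspaces $E\ni\xi$. On each such $E$, the Fourier transform of $\|\cdot\|_{K\cap E}^{-m}$ in $E$ is of "degree $0$" type, and in dimension $m\le 3$ it is known to be positive: every origin-symmetric convex body in $\mathbb{R}^2$ or $\mathbb{R}^3$ is an intersection body, and $\|x\|^{-1}$, $\|x\|^{-2}$ are positive definite there. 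The aim is then an identity of the form
\begin{align*}
(\|\cdot\|_K^{-m})^\wedge(\xi)=c_{n,m}\int_{G(n,m)\,\ni\, \xi\text{-related}} \bigl(\|\cdot\|_{K\cap E}^{-m}\bigr)^{\wedge_E}(\ldots)\,\dd E + \text{(positive terms)},
\end{align*}
obtained by a slicing/Fubini argument in polar coordinates. This is a Blaschke–Petkantschin-type decomposition in which the low-dimensional positivity is transported upward.

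The main obstacle is that step. Positivity in low dimensions does not lift naively, since that is exactly why $m\ge4$ fails. So the decomposition has to produce only derivatives of order $\le m-1\le 2$ of section functions of convex bodies. Those are controlled by Brunn's concavity principle: $A_{K,\xi}^{1/(n-1)}$ is concave and even, which forces $A'' (0)\le 0$ and the required sign. I would first try to show that, for $m=2,3$, the relevant distribution reduces to $-A''_{K,\xi}(0)$ and $A_{K,\xi}(0)$ combined with nonnegative coefficients, using a Funk–Radon transform on $\s$ to move between $m$-sections and hyperplane sections. I would then verify the sign via Brunn's theorem, in the way Gardner–Koldobsky–Schlumprecht handled $n=4$. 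Having established positive definiteness, Zhang's equivalence yields the volume inequality of Theorem~\ref{1.1}.
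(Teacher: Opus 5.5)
Your proposal has a genuine gap at its starting point. The reduction does not give what Theorem~\ref{m-interesection} needs, and the central identity is never supplied.

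First, the exponent is wrong. Positive definiteness of a negative power of $\|\cdot\|_K$ is Koldobsky's characterization of \emph{his} $k$-intersection bodies, and the exponent there is $-k$ with $k=n-m$, i.e.\ $\|x\|_K^{-(n-m)}$, not $\|x\|_K^{-m}$. With the exponent you wrote, your target is false. Positive definiteness of $\|x\|_K^{-3}$ means $K$ is a Koldobsky $3$-intersection body, and Koldobsky proved that for $k<n-3$ (here $3<n-3$, i.e.\ $n\ge 7$) not every origin-symmetric convex body is a $k$-intersection body. Your heuristic that only derivatives of order at most $m-1$ of $A_{K,\xi}$ should appear corresponds to the exponent $-(n-m)$, via $(\|x\|_K^{-n+q})^\wedge(\xi)\propto A^{(q-1)}_{K,\xi}(0)$ with $q=m$. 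The exponent $-m$ instead produces derivatives of order about $n-m-1$.

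Second, even the corrected exponent does not suffice. Positivity for $m=3$ is then exactly Koldobsky's known theorem that every origin-symmetric convex body is an $(n-3)$-intersection body, proved by the Brunn argument $-A''_{K,\xi}(0)\ge 0$ you describe. That theorem did not settle the GBP problem. Zhang's class (a positive $\mu_K$ on $G(n,m)$ with $\mathrm{R}_m^t\mu_K=\rho_K^{n-m}\dd\sigma$) is a subclass of Koldobsky's class, and E.~Milman showed it is a proper subclass for $2\le k\le n-2$. So positive definiteness is only a necessary condition for what Theorem~\ref{m-interesection} requires, and no known result converts it back into the measure $\mu_K$.

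Third, the decomposition you hope for is never written down. The domain of integration, the argument of $(\|\cdot\|_{K\cap E}^{-m})^{\wedge_E}$ and the ``positive terms'' are all left open, and you yourself flag this step as the main obstacle.

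What is missing is a direct construction of a positive measure on the Grassmannian. The paper obtains it from the weight $w_f(E)=\det(-D^2P_{f,E}(0))$, $E\in G(n,3)$, with $f=\mathbf 1_K*\gamma_\varepsilon$.
\begin{itemize}
\item By Pr\'ekopa, $P_{f,E}$ is even and log-concave on $E^\perp$, hence maximal at $0$, so $w_f\ge 0$.
\item Lemma~\ref{continuity-density} rewrites $w_f(E)$ as the determinant of $\int_{E^\perp}\zeta\otimes\zeta\,\widehat f(\zeta)\dd\zeta$.
\item The Andr\'eief identity turns this into an integral against $\Delta_{n-3}^2$, which is exactly the Blaschke--Petkantschin Jacobian $\Delta_{n-3}^{m-1}$ when $m=3$. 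This yields $\mathrm{R}_3^tw_f=(2\pi)^{n-3}(\int_{\mathbb R}f(tu)\dd t)^{n-3}$ (Theorem~\ref{determinantal-Crofton}).
\item A weak-$*$ limit as $\varepsilon\to0^+$ gives $\mu_K$.
\end{itemize}
The convexity input there is the full $(n-3)\times(n-3)$ Hessian of the $3$-dimensional section function on $E^\perp$. The second derivative of the one-dimensional hyperplane-section function $A_{K,\xi}$ only controls Fourier positivity and cannot detect membership in Zhang's class. Finally, $m=2$ needs no separate treatment: it follows from $m=3$ and the Busemann--Petty theorem in $\mathbb R^3$.
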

 Consequently, together with the known results, the GBP problem is \emph{completely} solved: the answer  is affirmative for $m=1, 2, 3$, and negative for $m\geq 4$. Additionally, for $n=4$ and $m=3$, Theorem \ref{1.1} reduces to the classical BP problem in $\mathbb{R}^4$ solved by Zhang \cite{Zhang}.

In the following, we outline the  idea underlying the proof of Theorem \ref{1.1}. 

First, we observe that to prove Theorem \ref{1.1}, it suffices to prove the case
$m=3$. Indeed,  if $\V_2(K\cap F)\leq \V_2(L\cap F)$ for all $ F\in G(n,2)$, then for all $E\in G(n,3)$, $\V_2(K\cap E\cap F)\leq \V_2(L\cap E\cap F)$.
 By the affirmative solution to the classical BP problem in $\mathbb{R}^3$, we have  $\V_3(K\cap E)\leq \V_3(L\cap E)$ for all $E\in G(n,3)$, and therefore  $\V_{n}(K)\leq \V_{n}(L)$ by our \emph{supposed} affirmative solution.

Second, to prove the case $m=3$, it suffices to prove that each origin-symmetric convex body $K$ in $\r$ is a $3$-intersection body, which is equivalent to  constructing a \emph{positive} Borel measure $\mu_K$ on $G(n,3)$ such that  $\mathrm{R}_3^t\mu_K=\rho_K^{\,n-3}\dd\sigma,$
where $\mathrm{R}_3^t$ is the \emph{dual Funk--Radon transform} on $G(n,3)$, $\rho_K$ is the \emph{radial function} of $K$ and $\dd\sigma$ is the Haar probability measure on $\s$.  Please refer to Section \ref{2} for their definitions.

To construct such a measure $\mu_K$, it is crucial to make full use of the \emph{convexity} of \(K\) to guarantee the \emph{positivity}  of $\mu_K$. To this end,  we investigate the \emph{$3$-plane transform} $P_{f,E}(y)=\int_E f(x+y)\dd x,
\thinspace y\in E^\perp$,
and originally introduce the associated \emph{weight}
\[
w_f(E)=\det(-D^2P_{f,E}(0))
\]
for every Schwartz function $f$ and every subspace $E\in G(n,3)$. If in addition $f$ is  nonnegative, even
and log-concave (Specifically,  $f=\mathbf{1}_K$, i.e., the indicator function of $K$),  it guarantees that  $w_f(E)\geq 0$.  After reformulating $w_f(E)$ as the  identity (Lemma \ref{continuity-density}) 
\begin{align*}
    w_f(E)
=(2\pi)^{-(n-3)^2}
\det(
\int_{E^\perp}
\zeta\otimes\zeta\,\widehat f(\zeta)\dd\zeta),\quad E\in G(n,3),
\end{align*}
we establish the following \emph{vital} identity (Theorem \ref{determinantal-Crofton})
\begin{equation*}
(\mathrm{R}_3^t w_f)(u)
=(2\pi)^{n-3}
(
\int_{\mathbb{R}}f(tu)\dd t
)^{n-3},
\quad u\in\s,
\end{equation*}
which is  exactly  the ``engine" of this article.

Finally, by the Gaussian regularization $f_{\varepsilon}=\mathbf{1}_K*\gamma_\varepsilon$, i.e., the convolution of $\mathbf{1}_K$ and $\gamma_\varepsilon(x)
  =(2\pi\varepsilon^2)^{-n/2}
  e^{-|x|^2/(2\varepsilon^2)}$ for arbitrary $\varepsilon>0$, the desired measure $\mu_K$ is obtained via the $\text{weak--}^{*}$ limit of $w_{f_\varepsilon}(E)\dd E$ as $\varepsilon\to 0^+$ (see Theorem \ref{positive-representation}).
  \vskip 3pt

The article is organized as follows. In Section $\ref{2}$,  we collect some necessary facts on the Funk--Radon transform and  the $m$-plane transform. Several useful theorems, including the Pr\'ekopa theorem, the Blaschke--Petkantschin formula and the Andr\'eief identity,  are also provided. The proof of Theorem \ref{1.1} is presented in Section $\ref{3}$.

\vskip 3pt
\textbf{Declaration on the use of AI and Acknowledgment}: 
Theorem \ref{Cauchy-Binet or Andreief identity} (the Andr\'eief identity) was found with the help of ChatGPT 5.6 Sol,  which led us to prove \mbox{Theorem \ref{determinantal-Crofton}.}

The authors would like to thank Dr. Kaiwen Yang for extensive discussions and careful examination of earlier drafts.

\vskip 20pt
\section{\bf Preliminaries}
\label{2}
\vskip 5pt

\subsection{The Funk--Radon transform}
\label{2.1}
\
\vskip 5pt

As usual, write $C(\s)$ and $C(G(n,m))$ for the spaces of continuous functions on 
$\s$ and $G(n,m)$, respectively. 
 $G(n,m)$ is identified with $ {\rm O}(n)/({\rm O}(m)\times {\rm O}(n-m))$
and equipped with the quotient topology, where ${\rm O}(n)$ is the orthogonal group on $\mathbb{R}^n$. Hence,  a function
$g$ is \emph{continuous} on $G(n,m)$ if and only if $U\mapsto g(UE)$
is continuous on ${\rm O}(n)$ for a fixed $E\in G(n,m)$. See, e.g.,
 \cite[Appendix A]{RubinG} for more explanations.
 
 Write $\dd\sigma$ and $\dd E$ for the Haar probability measures on $\s$ and $G(n,m)$, respectively. The classical \emph{Funk--Radon transform} $\mathrm{R}_{m}: C(\s) \to C(G(n,m))$
is
\[
(\mathrm{R}_{m} f)(E)
=\int_{\s\cap E}f(u)\dd\sigma_E(u),\quad E\in G(n,m),\enspace f\in C(\s),
\]
where $\dd\sigma_E$ is the Haar
probability measure on $\s\cap E$. 

For an origin-symmetric convex body
$K$ in $ \r$, its \emph{radial function} is
\[
\rho_{K}(u)=\max \{\lambda \geq 0: \lambda u \in K\}, \quad u \in \s .
\]
The Funk--Radon transform $\R_m$
is closely connected with the central sections of
origin-symmetric convex bodies $K$ by the following formula
\begin{equation*}
(\mathrm{R}_{m} \rho_{K}^{m})(E)
=\frac{1}{\omega_{m}}\V_m(K \cap E),\quad E\in G(n,m),
\end{equation*}
where $\omega_m=\pi^{\frac{m}{2}}/\Gamma(1+\frac{m}{2})$ is the volume of the unit ball $B_m$ in $\mathbb{R}^m$.

Write $\mathcal{M}(\s)$ and $\mathcal{M}(G(n,m))$ for the spaces of signed Borel
measures on $\s$ and $G(n,m)$, respectively. The \emph{dual transform} $\R^t_m$ of $\R_m$ is the map $\mathcal{M}(G(n,m))\to \mathcal{M}(\s)$ given by
\[
  \int_{\s}f\dd(\mathrm{R}_m^t\mu)
  =\int_{G(n,m)}(\mathrm{R}_mf)(E)\dd\mu(E),
  \quad  f\in C(\s).
\]
If the measure $\mu$ has a continuous density $g$, then the transform may be explicitly
written in terms of $g$ (see \cite{Zhang96}) as 
\begin{align}
\label{dual}
    (\mathrm{R}_{m}^{t}g)(u)
=\int_{u \in E \in G(n,m)} g(E)\dd\nu_m(E),\quad u\in\s,
\end{align}
where $\nu_m$ is the Haar probability measure on $G(n-1, m-1)$. 

Following \cite{Zhang96}, an origin-symmetric convex body $K$ in $\r$ is called an \emph{$m$-intersection body}, if there exists a positive Borel measure $\mu_K$ on $G(n,m)$ such that $\mathrm{R}_m^t\mu_K=\rho_K^{\,n-m}\dd\sigma.$
 Zhang \cite[Theorem 7]{Zhang96} proved the following theorem.
\begin{thm}[{\cite[Theorem 7]{Zhang96}}]
\label{m-interesection}
     The GBP problem for $m\in\{1,2,\ldots,n-1\}$ has an affirmative answer in $\r$ if and only if each origin-symmetric convex body $K$ in $\r$ is an $m$-intersection body.
\end{thm}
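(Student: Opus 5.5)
The plan is to prove the two directions separately. Sufficiency is a direct computation with Hölder's inequality. Necessity is a Hahn--Banach separation followed by a perturbation argument that must be arranged to preserve convexity.

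\emph{Sufficiency.} Assume every origin-symmetric convex body is an $m$-intersection body, and let $\V_m(K\cap E)\le \V_m(L\cap E)$ for all $E\in G(n,m)$. Take $\mu_K\ge 0$ with $\mathrm{R}_m^t\mu_K=\rho_K^{\,n-m}\dd\sigma$. Using $\V_n(K)=\omega_n\int_{\s}\rho_K^n\dd\sigma$ and the section formula $(\mathrm{R}_m\rho^m_K)(E)=\omega_m^{-1}\V_m(K\cap E)$, I would write
\[
\int_{\s}\rho_K^n\dd\sigma=\int_{\s}\rho_K^m\,\rho_K^{n-m}\dd\sigma=\int_{G(n,m)}\mathrm{R}_m(\rho_K^m)\dd\mu_K\le\int_{G(n,m)}\mathrm{R}_m(\rho_L^m)\dd\mu_K=\int_{\s}\rho_L^m\rho_K^{n-m}\dd\sigma .
\]
The inequality uses the positivity of $\mu_K$. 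Hölder's inequality with exponents $n/m$ and $n/(n-m)$ bounds the last integral by $(\int\rho_L^n)^{m/n}(\int\rho_K^n)^{(n-m)/n}$. This gives $\V_n(K)\le\V_n(L)$.

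\emph{Necessity.} Suppose some origin-symmetric convex body $K$ is not an $m$-intersection body.

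First, I show the class of $m$-intersection bodies is closed under radial convergence. If $\rho_{K_j}\to\rho_K$ uniformly, then the measures $\mu_{K_j}$ have bounded total mass (test against $f\equiv1$), so weak-$*$ compactness on the compact space $G(n,m)$ gives a limit $\mu\ge0$ with $\mathrm{R}^t_m\mu=\rho_K^{n-m}\dd\sigma$. Consequently I may replace $K$ by a nearby origin-symmetric body with $C^\infty$ boundary and strictly positive curvature that is still not an $m$-intersection body. I keep calling it $K$.

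Next, the cone $\mathcal{C}=\{\mathrm{R}_m^t\mu:\mu\ge0\}$ is weak-$*$ closed in the even measures on $\s$, by the same compactness argument. Since $\rho_K^{n-m}\dd\sigma\notin\mathcal{C}$, Hahn--Banach gives an even $g\in C(\s)$ with $\int_{G(n,m)}\mathrm{R}_mg\dd\mu\ge0$ for all $\mu\ge0$, that is $\mathrm{R}_mg\ge0$, and with $\int_{\s} g\rho_K^{n-m}\dd\sigma<0$. I then replace $g$ by $g+\delta$ and smooth it by convolution on ${\rm O}(n)$. This commutes with $\mathrm{R}_m$ and keeps both properties for small $\delta$ and fine mollification, so $g$ may be taken $C^\infty$ and even.

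Now define $L_\varepsilon$ by $\rho_{L_\varepsilon}^m=\rho_K^m+\varepsilon g$. Then $\V_m(L_\varepsilon\cap E)\ge\V_m(K\cap E)$ for every $E$. Expanding,
\[
\int_{\s}\rho_{L_\varepsilon}^n\dd\sigma=\int_{\s}\rho_K^n\dd\sigma+\varepsilon\frac nm\int_{\s} g\rho_K^{n-m}\dd\sigma+O(\varepsilon^2),
\]
so $\V_n(L_\varepsilon)<\V_n(K)$ for small $\varepsilon$.

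The main obstacle is ensuring $L_\varepsilon$ is convex. Here the smoothness of $g$ and the strict positive curvature of $K$ do the work. The radial function of $L_\varepsilon$ is a $C^2$-small perturbation of $\rho_K$, and positive curvature is an open condition in the $C^2$ topology, so $L_\varepsilon$ is an origin-symmetric convex body for small $\varepsilon$. The pair ($K$, $L_\varepsilon$) then violates GBP, which completes the equivalence.
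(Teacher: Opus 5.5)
Your proof is correct. The paper gives no proof of this statement; it imports it from Zhang. Your argument is essentially Zhang's original one:
\begin{itemize}
\item Sufficiency: write $\int_{\s}\rho_K^n\,\dd\sigma=\int_{G(n,m)}\mathrm{R}_m(\rho_K^m)\,\dd\mu_K$, use the positivity of $\mu_K$, and finish with H\"older (the dual mixed volume inequality).
\item Necessity: closedness of the class under radial convergence lets you pass to a $C^\infty_+$ body. A separation argument gives an even smooth $g$ with $\mathrm{R}_m g\ge 0$ and $\int_{\s} g\rho_K^{n-m}\,\dd\sigma<0$. The perturbation $\rho_{L_\varepsilon}^m=\rho_K^m+\varepsilon g$ then stays convex because positive curvature is $C^2$-open.
\end{itemize}
The paper's proof of Theorem~\ref{1.1} uses only the sufficiency half, which is the elementary part of your argument.

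Two steps deserve a precise justification.

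\emph{Weak-$*$ closedness of the cone.} Your compactness argument shows only that each bounded piece $\mathcal{C}\cap\{\|\nu\|\le r\}$ is weak-$*$ compact. This holds because $\|\mathrm{R}_m^t\mu\|=\mu(G(n,m))$ for $\mu\ge 0$, since $\mathrm{R}_m1=1$. The separation theorem needs closedness of the whole cone, which then follows from the Krein--\v{S}mulian theorem. Alternatively, you can avoid the weak-$*$ topology entirely. If no separating $g$ existed, then $\mathrm{R}_m\varphi\mapsto\int_{\s}\varphi\rho_K^{n-m}\,\dd\sigma$ would be well defined, since $\mathrm{R}_m\varphi=0$ forces $\mathrm{R}_m(\pm\varphi)\ge0$. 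It would also be a positive functional on the subspace $\mathrm{R}_m(C(\s))\subset C(G(n,m))$, which contains the constants. Krein's extension theorem together with Riesz representation would then give the required $\mu_K\ge 0$ directly.

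\emph{Convexity of $L_\varepsilon$.} This tacitly uses Hadamard's theorem: a closed embedded hypersurface with positive definite second fundamental form bounds a convex body. The result is standard, but it is the step that turns ``positive curvature'' into ``convex body''.
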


\subsection{The \texorpdfstring{$m$}{m}-plane transform and several useful results}
\label{2.2}
\
\vskip 5pt

Let $\Sch(\mathbb{R}^n)$ be the Schwartz space on $\r$. For $f\in\Sch(\r)$, its \emph{Fourier transform} is 
\[
  \widehat f(\zeta)
  =\int_{\mathbb{R}^n}f(x)e^{-i\langle x,\zeta\rangle}\dd x, \quad \zeta\in\r,
\]
where $\langle x,\zeta\rangle$ is the standard inner product of $x$ and $\zeta$ in $\r$.

 The  Fourier inversion theorem reads:
\begin{align*}
    f(x)=(2\pi)^{-n}\int_{\mathbb{R}^n}
  \widehat f(\zeta)e^{i\langle x,\zeta\rangle}\dd\zeta,\quad x\in\r.
\end{align*}

For $E\in G(n,m)$ and  $f\in\Sch(\mathbb{R}^n)$, the \emph{$m$-plane transform}  $P_{f,E}: E^{\perp}\to \mathbb{C}$ is 
\begin{align*}
    P_{f,E}(y)=\int_E f(x+y)\dd x,
  \quad y\in E^\perp,
\end{align*}
where $E^{\perp}$ is the orthogonal complement of $E$.
Let $K$ be a convex body in $\mathbb{R}^n$. Write 
\begin{align*}
    P_{K,E}(y)
  =\int_E \mathbf{1}_K(x+y)\dd x
  =\V_m(K\cap(E+y)),
  \quad y\in E^\perp.
\end{align*}

 The slice-projection theorem \cite[Theorem 3.27]{Markoe} reads:
\begin{align}
\label{Sp}
    \widehat{P_{f,E}}(\zeta)=\widehat f(\zeta),\quad \zeta\in E^{\perp},
\end{align}
where $\widehat{P_{f,E}}$ is the Fourier transform of $P_{f,E}$ in $E^{\perp}$. Therefore, $P_{f,E}\in \Sch(E^{\perp})$. 

From the slice-projection theorem,  the Fourier inversion theorem and the definition of $P_{f,\mathbb{R}u}$, we have
\begin{align*}
    \int_{u^\perp}\widehat f(\zeta)\dd\zeta=\int_{u^\perp}\widehat{P_{f,\mathbb{R}u}}(\zeta)\dd\zeta=(2\pi)^{n-1}P_{f,\mathbb{R}u}(0)=(2\pi)^{n-1}\int_{\mathbb{R}}f(tu)\dd t,\quad u\in\s.
\end{align*}
Thus,
\begin{align}
\label{Fourier}
    \int_{u^\perp}\widehat f(\zeta)\dd\zeta=(2\pi)^{n-1}\int_{\mathbb{R}}f(tu)\dd t,\quad u\in\s.
\end{align}

 The following Theorem \ref{log-concave} is established by Pr\'ekopa in \cite[Theorems 6 and 7]{Prekopa}. 
\begin{thm}[{\cite[Theorems 6 and 7]{Prekopa}}] 

\label{log-concave}
The following assertions hold.
\begin{itemize}
    \item [$(1).$] If $f(x,y): \mathbb{R}^k\times \mathbb{R}^l \to \mathbb{R}$ is log-concave  and $A$ is a convex subset of $\mathbb{R}^l$, then $x\mapsto\int_A f(x,y)\dd y$ is log-concave in $\mathbb{R}^k$.
    \item [$(2).$] If $f$ and $g$ are log-concave functions defined on $\r$, then 
the convolution 
\begin{align*}
    f*g(x)=\int_{\r}f(x-y)g(y)\dd y,\quad x\in\r,
\end{align*}
is also log-concave.
\end{itemize}
    
\end{thm}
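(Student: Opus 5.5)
The statement is Theorem \ref{log-concave} (Pr\'ekopa). I would deduce both assertions from the Pr\'ekopa--Leindler inequality: if $\lambda\in(0,1)$ and $h,f_0,f_1:\mathbb{R}^l\to[0,\infty)$ are measurable with
\[
h(\lambda y_1+(1-\lambda)y_0)\ge f_1(y_1)^{\lambda}f_0(y_0)^{1-\lambda}\quad \text{for all } y_0,y_1,
\]
then $\int h\ge (\int f_1)^{\lambda}(\int f_0)^{1-\lambda}$. I would prove it first and then derive (1) and (2).

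\emph{Step 1: Pr\'ekopa--Leindler for $l=1$.} By monotone convergence it suffices to treat bounded $f_0,f_1$ with $\|f_0\|_\infty,\|f_1\|_\infty>0$. Rescale $f_0,f_1,h$ by constants $a_0,a_1,a_1^{\lambda}a_0^{1-\lambda}$; the hypothesis and conclusion are invariant, so we may assume $\|f_0\|_\infty=\|f_1\|_\infty=1$. For $0<t<1$ the level sets $\{f_0\ge t\}$ and $\{f_1\ge t\}$ are nonempty. The hypothesis gives
\[
\{h\ge t\}\supseteq \lambda\{f_1\ge t\}+(1-\lambda)\{f_0\ge t\}.
\]
The one-dimensional Brunn--Minkowski inequality $|A+B|\ge|A|+|B|$ can be proved for compact sets by translating so that $\max A=0=\min B$, then approximating from inside. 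It yields
\[
|\{h\ge t\}|\ge \lambda|\{f_1\ge t\}|+(1-\lambda)|\{f_0\ge t\}|.
\]
Integrating in $t\in(0,1)$ (layer cake) gives $\int h\ge \lambda\int f_1+(1-\lambda)\int f_0$. The weighted AM--GM inequality then gives the claim.

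\emph{Step 2: induction on $l$.} Write $y=(s,z)\in\mathbb{R}\times\mathbb{R}^{l-1}$. For fixed $s_0,s_1$, put $s=\lambda s_1+(1-\lambda)s_0$. The functions $h(s,\cdot)$, $f_1(s_1,\cdot)$, $f_0(s_0,\cdot)$ satisfy the hypothesis in $\mathbb{R}^{l-1}$. By induction, $H(s)=\int h(s,z)\dd z$ satisfies $H(s)\ge F_1(s_1)^{\lambda}F_0(s_0)^{1-\lambda}$, where $F_j(s_j)=\int f_j(s_j,z)\dd z$. Applying Step 1 to $H,F_0,F_1$ and using Fubini completes the induction. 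Measurability of these marginals follows from Tonelli.

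\emph{Step 3: deduce (1).} Fix $x_0,x_1\in\mathbb{R}^k$ and $\lambda\in(0,1)$, and put $x_\lambda=\lambda x_1+(1-\lambda)x_0$. Define
\[
f_j(y)=f(x_j,y)\mathbf{1}_A(y)\ (j=0,1),\qquad h(y)=f(x_\lambda,y)\mathbf{1}_A(y).
\]
Log-concavity of $f$ (with $f\ge0$, allowing the value $0$) and convexity of $A$ give $h(\lambda y_1+(1-\lambda)y_0)\ge f_1(y_1)^{\lambda}f_0(y_0)^{1-\lambda}$. Pr\'ekopa--Leindler then gives
\[
\int_A f(x_\lambda,y)\dd y\ge \Bigl(\int_A f(x_1,y)\dd y\Bigr)^{\lambda}\Bigl(\int_A f(x_0,y)\dd y\Bigr)^{1-\lambda},
\]
which is the log-concavity of the marginal. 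Measurability is harmless, since log-concave functions have convex superlevel sets and are therefore Borel up to null sets.

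\emph{Step 4: deduce (2).} The map $(x,y)\mapsto f(x-y)$ is log-concave on $\mathbb{R}^{2n}$, being the composition of a log-concave function with a linear map. Likewise $(x,y)\mapsto g(y)$ is log-concave, and products of log-concave functions are log-concave. Hence $F(x,y)=f(x-y)g(y)$ is log-concave, and (1) with $A=\r$ shows that $f*g$ is log-concave.

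The main obstacle is Step 1, specifically the normalization and the one-dimensional Brunn--Minkowski inequality for possibly non-compact level sets. Once that is in place, the remaining steps are formal.
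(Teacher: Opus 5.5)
The paper does not prove this statement; it only cites Pr\'ekopa. Your proof is correct and is the standard argument behind the cited Theorems 6 and 7: you prove a Pr\'ekopa--Leindler inequality via one-dimensional Brunn--Minkowski, layer cake and induction on dimension. You then apply it to $f(x_j,\cdot)\mathbf{1}_A$ and $f(x_\lambda,\cdot)\mathbf{1}_A$ for (1), and to the log-concave function $(x,y)\mapsto f(x-y)g(y)$ on $\mathbb{R}^{2n}$ for (2).

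One point in Step 2 should be tightened. Sections of a merely Lebesgue measurable function need not all be measurable, so the induction hypothesis cannot be applied to $h(s,\cdot)$, $f_j(s_j,\cdot)$ for every $s,s_0,s_1$. There are two easy fixes. You can run the induction for Borel functions, replacing $f_0,f_1$ by Borel minorants and $h$ by a Borel majorant that agree with them a.e.; this changes neither the hypothesis nor the integrals. Alternatively, note that in Step 3 all three functions are log-concave, so their sections are log-concave and therefore measurable.
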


For $d\in\{1,2,\ldots,n\}$ and  $x_1,\ldots,x_d$ in $\r$, define 
\[
  \Delta_d(x_1,\ldots,x_d)
  =\det((\langle x_k,x_l\rangle)_{k,l=1}^d)^{1/2} 
  \quad\text{and}\quad
  s_{n,d}=\frac{2^d\pi^{nd/2}}{\Gamma_d(n/2)},
\]
where $\Gamma_d(\frac{n}{2})=\pi^{d(d-1)/4}\prod\limits_{j=0}^{d-1}\Gamma(\frac{n-j}{2})$.

The following Blaschke--Petkantschin formula (see \cite[Theorem 2.1]{RubinBP}) is needed. 

\begin{thm}
\label{oushi}
If $1\leq k\leq m\leq n$ and $h\in L^1((\mathbb{R}^n)^k)$, then
\begin{equation*}
\begin{aligned}
  \int_{(\mathbb{R}^n)^k}h(x_1,\ldots,x_k)\dd x_1\cdots \dd x_k=\frac{s_{n,k}}{s_{m,k}}
  \int_{G(n,m)}\int_{E^k}
  h(x_1,\ldots,x_k)\,
  \Delta_k(x_1,\ldots,x_k)^{n
  -m}
  \dd x_1\cdots \dd x_k\dd E.
\end{aligned}
\end{equation*}
\end{thm}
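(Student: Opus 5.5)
The plan is to prove Theorem \ref{oushi} by a uniqueness-of-invariant-measures argument, and then to fix the constant by testing on a Gaussian. The set of linearly dependent $k$-tuples is Lebesgue-null in $(\mathbb{R}^n)^k$ and in each $E^k$ (because $k\le m$). So both sides may be viewed as measures on the open set $V$ of linearly independent $k$-tuples $X=(x_1,\dots,x_k)$, regarded as $n\times k$ matrices of rank $k$. Let $\lambda$ denote the measure on $V$ given by the left-hand side, and $\nu$ the one given by the integral on the right without the constant. First I will check that $\nu$ is a Radon measure on $V$. This holds because $\Delta_k^{\,n-m}$ is locally bounded and $\dd E$ is a probability measure.

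The group $G={\rm O}(n)\times GL(k)$ acts on $V$ by $X\mapsto UXA^{-1}$. This action is transitive. Indeed, Gram--Schmidt, i.e.\ right multiplication by an element of $GL(k)$, sends $X$ to an orthonormal $k$-frame. Then ${\rm O}(n)$ acts transitively on orthonormal $k$-frames. Lebesgue measure $\lambda$ is ${\rm O}(n)$-invariant, and under $X\mapsto XA$ it scales by $|\det A|^{n}$.

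For $\nu$, the ${\rm O}(n)$-invariance follows from the invariance of $\dd E$ together with the invariance of $\Delta_k$ and of Lebesgue measure on $E^k$ under isometries $E\to UE$. Under $X\mapsto XA$, each $E^k$ is preserved, Lebesgue measure on $E^k\cong(\mathbb{R}^m)^k$ scales by $|\det A|^{m}$, and $\Delta_k(XA)=|\det A|\,\Delta_k(X)$. Hence $\nu$ scales by $|\det A|^{m}\cdot|\det A|^{n-m}=|\det A|^{n}$, the same character as $\lambda$. A Radon measure on a homogeneous space that is relatively invariant with a prescribed character is unique up to a positive constant. Therefore $\lambda=c\,\nu$, and the identity holds for all $h\in L^1$ by the usual density and monotone class argument.

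To compute $c$, I will take $h(X)=e^{-\sum_i|x_i|^2/2}$. The left side equals $(2\pi)^{nk/2}$. On the right, the inner integral does not depend on $E$, and it equals
\[
\int_{(\mathbb{R}^m)^k}e^{-{\rm tr}(Y^TY)/2}\det(Y^TY)^{(n-m)/2}\dd Y .
\]
This is a Wishart moment. Changing variables $S=Y^TY$ gives
\[
\frac{\pi^{mk/2}}{\Gamma_k(m/2)}\int_{S>0}e^{-{\rm tr}S/2}(\det S)^{(n-k-1)/2}\dd S=(2\pi)^{nk/2}\,\frac{\pi^{mk/2}\,\Gamma_k(n/2)}{\pi^{nk/2}\,\Gamma_k(m/2)} .
\]
Comparing the two sides gives $c=s_{n,k}/s_{m,k}$, as claimed.

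I expect the main obstacle to be the uniqueness step. It must be justified that $V$ is a single $G$-orbit on which a relatively invariant measure exists, with the character vanishing appropriately on the compact stabilizer. Alternatively, one can compute the Jacobian of $(U,Y)\mapsto UY$ directly. If the abstract route proves awkward, I would instead verify the identity by this explicit Jacobian computation, using the polar decomposition $X=QT$ with $Q$ in the Stiefel manifold and $T$ upper triangular.
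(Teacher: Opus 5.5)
Your proof is correct, but it does not follow the paper, which gives no argument here and simply imports the formula from Rubin. There $s_{n,k}$ is the volume of the Stiefel manifold $V_{n,k}$. The usual direct proof has three steps:
\begin{enumerate}
\item Use matrix polar coordinates $X=v\,r^{1/2}$, with $v\in V_{n,k}$ and $r=X^TX$, so that $\dd X=2^{-k}(\det r)^{(n-k-1)/2}\dd r\,\dd v$.
\item Average over frames lying in $E$: $\int_{V_{n,k}}g\,\dd v=\frac{s_{n,k}}{s_{m,k}}\int_{G(n,m)}\int_{V_k(E)}g\,\dd w\,\dd E$.
\item Write $(\det r)^{(n-k-1)/2}=(\det r)^{(m-k-1)/2}(\det r)^{(n-m)/2}$ and reassemble polar coordinates inside $E^k$.
\end{enumerate}
The constant then appears directly as a ratio of Stiefel volumes, with no test function.

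You instead use uniqueness of relatively invariant measures on the single $\mathrm{O}(n)\times GL(k)$-orbit $V$. Matching the characters $|\det A|^{n}$ and $|\det A|^{m}\cdot|\det A|^{n-m}$ gives the identity up to a constant with almost no computation. It also shows why the weight must be $\Delta_k^{n-m}$. The price has two parts:
\begin{itemize}
\item You invoke uniqueness for a non-compact group on a non-compact orbit, whereas the direct proof needs it only for $\mathrm{O}(n)$ acting on the compact manifold $V_{n,k}$.
\item Your Wishart step is itself the polar-coordinate formula in $\mathbb{R}^{m\times k}$; its constant $\pi^{mk/2}/\Gamma_k(m/2)$ equals $2^{-k}s_{m,k}$. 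So the matrix machinery is not avoided altogether, only moved to the normalization.
\end{itemize}

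The step you flag as the main obstacle is harmless.
\begin{itemize}
\item Existence is supplied by Lebesgue measure itself.
\item The stabilizer of $(e_1,\ldots,e_k)$ is the compact group $\mathrm{O}(k)\times\mathrm{O}(n-k)$, on which the character is trivial (rather than ``vanishing'').
\item Dividing both measures by the positive continuous function $\det(X^TX)^{n/2}$, which picks up the compensating factor $|\det A|^{-n}$, reduces everything to uniqueness of a genuinely invariant measure on $G/H$ with $G$ unimodular and $H$ compact.
\end{itemize}
For the passage to $L^1$, note that the sets $E^k$ are Lebesgue-null when $m<n$. You should therefore check, by applying the nonnegative case to $|h-h'|$, that the right-hand side does not depend on the chosen representative of $h$.
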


The following Andr\'eief identity (see \cite[Proposition 7.1]{Zygouras}),  an
integral version of the classical Cauchy--Binet formula, will be used. Refer, e.g., to \cite[Section~2.2]{Forrester2019} for its proof.

\begin{thm}[Cauchy--Binet or Andr\'eief identity]
\label{Cauchy-Binet or Andreief identity}
If $(X, \mu)$ is a measure space and $\{\phi_{j}(\cdot)\}_{j=1}^N, \{\psi_{k}(\cdot)\}_{k=1}^N$ $\subset L^{2}(X, \mu)$, then
\begin{align*}
    \operatorname{det}((\int_{X} \phi_{j}(x) \psi_{k}(x)\dd \mu(x))_{j,k=1}^N)
    =\frac{1}{N!}\int_{X^{N}} \operatorname{det}((\phi_{j}(x_{k}))_{j,k=1}^N) \operatorname{det}((\psi_{j}(x_{k}))_{j,k=1}^N) \dd\mu( x_{1}) \cdots \dd\mu( x_{N}).
\end{align*}

\end{thm}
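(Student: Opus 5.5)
The Andr\'eief identity is a purely algebraic consequence of the multilinearity of the determinant together with Fubini's theorem, so I will prove it by expanding one determinant and symmetrizing. The $L^2$ hypothesis guarantees that every product $\phi_j\psi_k$ lies in $L^1(X,\mu)$ by Cauchy--Schwarz. Hence each entry $A_{jk}=\int_X\phi_j\psi_k\dd\mu$ is finite. Moreover, each summand $\prod_k \phi_{\pi(k)}(x_k)\psi_{\tau(k)}(x_k)$ of the integrand on $X^N$ is a product of $L^1$ functions of separate variables, hence lies in $L^1(X^N)$ by Tonelli. So all interchanges of sums and integrals below are justified.

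First, I expand the left side by the Leibniz formula,
\[
\det(A)=\sum_{\tau\in S_N}\operatorname{sgn}(\tau)\prod_{k=1}^N\int_X\phi_k(x)\psi_{\tau(k)}(x)\dd\mu(x).
\]
Renaming the integration variable in the $k$-th factor as $x_k$ and applying Fubini, this becomes
\[
\int_{X^N}\prod_{k=1}^N\phi_k(x_k)\,\det\big((\psi_{j}(x_k))_{j,k=1}^N\big)\dd\mu(x_1)\cdots\dd\mu(x_N).
\]
Here I have used that $\sum_\tau \operatorname{sgn}(\tau)\prod_k\psi_{\tau(k)}(x_k)$ is exactly $\det((\psi_j(x_k))_{j,k})$ under the paper's index convention (rows $j$, columns $k$). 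Up to transposition, which does not change the determinant, this is the Leibniz expansion.

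Next, I symmetrize over relabelings of the variables. For any $\pi\in S_N$, the substitution $x_k\mapsto x_{\pi(k)}$ preserves the product measure $\mu^{\otimes N}$. It permutes the columns of $(\psi_j(x_k))$, multiplying that determinant by $\operatorname{sgn}(\pi)$. It also turns $\prod_k\phi_k(x_k)$ into $\prod_k\phi_k(x_{\pi(k)})=\prod_k\phi_{\pi^{-1}(k)}(x_k)$. Therefore the integral equals
\[
\int_{X^N}\operatorname{sgn}(\pi)\prod_k\phi_{\pi^{-1}(k)}(x_k)\,\det(\psi_j(x_k))\dd\mu^{\otimes N}
\]
for every $\pi$. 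Averaging over all $N!$ permutations and using $\operatorname{sgn}(\pi^{-1})=\operatorname{sgn}(\pi)$, the sum $\sum_\pi\operatorname{sgn}(\pi)\prod_k\phi_{\pi^{-1}(k)}(x_k)$ is precisely $\det((\phi_j(x_k))_{j,k})$. This yields the right-hand side with the factor $1/N!$.

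The only step requiring care is the bookkeeping of signs and indices in the change of variables: checking that the column permutation of the $\psi$-matrix and the reindexing of the $\phi$-product produce the same sign, so that the two combine into $\det(\phi_j(x_k))$ rather than cancelling. Integrability is routine given the $L^2$ assumption.
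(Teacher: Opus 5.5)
Your proof is correct. It is the standard argument: expand $\det A$ by Leibniz, factor each product of integrals into one integral over $X^N$ by Fubini, then average over relabelings of the variables; your sign and index bookkeeping checks out. The paper does not prove this identity itself but defers to Forrester's Section~2.2, where essentially this same permutation-expansion argument appears.

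The one tacit point is that Fubini and Tonelli need $\sigma$-finiteness, which a general measure space need not have. This is harmless here: every summand of the integrand vanishes outside $S^N$, where
\[
S=\bigcup_j\{\phi_j\neq0\}\cup\bigcup_k\{\psi_k\neq0\},
\]
and $S$ is $\sigma$-finite because all these functions lie in $L^2$.
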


From Theorem \ref{Cauchy-Binet or Andreief identity}, we establish the following identity immediately.

\begin{cor}
\label{Andreief lemma}
If $\phi:\mathbb{R}^n\to\mathbb C$ is measurable and satisfies $\int_{\mathbb{R}^n}|x|^2|\phi(x)|\dd x<\infty,$
then
\[
  \det(\int_{\mathbb{R}^n}x\otimes x\,\phi(x)\dd x)
  =\frac{1}{n!}\int_{(\mathbb{R}^n)^n}
  \Delta_n(x_1,\ldots,x_n)^2
  \mathsmaller{\prod}\limits_{j=1}^n\phi(x_j)\dd x_1\cdots \dd x_n.
\]
\end{cor}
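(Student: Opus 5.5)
The corollary is Theorem \ref{Cauchy-Binet or Andreief identity} applied with $N=n$, $X=\mathbb{R}^n$, $\dd\mu=\phi(x)\dd x$, $\phi_j(x)=\psi_j(x)=x_j$ (the $j$-th coordinate). The measure is complex, so I would use the identity in its purely algebraic form, valid for any complex weight with absolutely convergent integrals. This form follows from the same proof: expand both determinants on the right by the Leibniz formula and integrate term by term using Fubini.

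\textbf{Left side.} The matrix entries are $\int_{\mathbb{R}^n}x_jx_k\,\phi(x)\dd x$. These are exactly the entries of $\int_{\mathbb{R}^n}x\otimes x\,\phi(x)\dd x$, and they converge absolutely since $|x_jx_k|\le|x|^2$ and $\int|x|^2|\phi|<\infty$.

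\textbf{Right side.} The matrix $(\phi_j(x_k))_{j,k}=((x_k)_j)_{j,k}$ is the $n\times n$ matrix $M$ whose columns are $x_1,\dots,x_n$. Hence the product of the two determinants is $\det(M)^2=\det(M^{T}M)=\det((\langle x_k,x_l\rangle)_{k,l=1}^n)=\Delta_n(x_1,\ldots,x_n)^2$. The product measure $\dd\mu(x_1)\cdots\dd\mu(x_n)$ becomes $\prod_{j=1}^n\phi(x_j)\dd x_1\cdots\dd x_n$, which gives the stated formula.

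\textbf{Integrability.} The only care needed is to justify Fubini for the complex measure. By Hadamard's inequality, $\Delta_n^2\le\prod_j|x_j|^2$, so
\[
\int_{(\mathbb{R}^n)^n}\Delta_n^2\prod_j|\phi(x_j)|\dd x_1\cdots\dd x_n\le\Big(\int_{\mathbb{R}^n}|x|^2|\phi(x)|\dd x\Big)^n<\infty.
\]
Each Leibniz term is similarly dominated by $\prod_j|x_j|^2|\phi(x_j)|$, so the term-by-term integration is legitimate.

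The $L^2$ hypothesis of the theorem as stated may not hold literally, since $\mu$ is a complex measure. The cleanest route is therefore to present the short direct Leibniz/Fubini proof of the identity rather than citing the hypothesis. This bookkeeping is the only (minor) obstacle.
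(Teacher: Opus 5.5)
Your proposal is correct and is essentially the paper's argument: apply the Andr\'eief identity with the coordinate functions, then recognize $\det\bigl((\langle x_k,e_j\rangle)_{j,k}\bigr)^2=\Delta_n(x_1,\ldots,x_n)^2$. The only difference is how the complex weight is handled. The paper keeps the cited theorem's hypotheses literally by using the positive measure $|\phi(x)|\dd x$ and absorbing the phase $\theta=\phi/|\phi|$ into $\phi_j(x)=\theta(x)\langle x,e_j\rangle$; you instead reprove the identity directly for the complex weight by Leibniz expansion and Fubini, dominating each term by $\prod_j|x_j|^2|\phi(x_j)|$, which is equally valid and self-contained.
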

\begin{proof}
Choose an orthonormal basis $e_1,\ldots,e_n$ of $\mathbb{R}^n$ and define
\[
\theta(x)=
\begin{cases}
\dfrac{\phi(x)}{|\phi(x)|}, & \phi(x)\neq 0,\\[6pt]
0, & \phi(x)=0.
\end{cases}
\]
 Then $x\mapsto \theta(x)\langle x,e_j\rangle$ and $x\mapsto\langle x,e_j\rangle$ belong to $ L^2(\mathbb{R}^n, |\phi(x)|\dd x)$, since 
 \begin{align*}
     \int_{\r}|\langle x,e_j\rangle|^2|\phi(x)|\dd x\leq \int_{\mathbb{R}^n}|x|^2|\phi(x)|\dd x<\infty.
 \end{align*}
 
 Putting $(X, \mu)=(\mathbb{R}^n, |\phi(x)|\dd x)$, $N=n$, $\phi_j(x)=\theta(x)\langle x,e_j\rangle$, and $\psi_k(x)=\langle x,e_k\rangle$ into Theorem \ref{Cauchy-Binet or Andreief identity}, we  establish the desired identity.
\end{proof}
In \cite{LYZ}, Lutwak,  Yang and  Zhang established  a similar identity  on the unit sphere $\s$ by mixed discriminants.

\vskip 20pt
\section{\bf Proof of Theorem \ref{1.1}}
\label{3}
\vskip 10pt

Recall that for a Schwartz function  $f\in\Sch(\r)$ and a subspace $E\in G(n,m)$, the $m$-plane transform 
 $P_{f,E}(y)=\int_E f(x+y)\dd x,\; y\in E^\perp$. We define the \emph{weight} $w_f$ of $f$ by 
\[
  w_f:G(n,m)\to \mathbb{C},\quad\quad w_f(E)=\det (-D^2P_{f,E}(0)),
\]
where $D^2$ is the Hessian on  $E^{\perp}$.

\begin{lem}
\label{continuity-density}
If $f\in\Sch(\mathbb{R}^n)$, then the function $ w_f$ is continuous on $G(n,m)$ and 
\begin{align*}
    w_f(E)=(2\pi)^{-(n-m)^2}\det(\int_{E^{\perp}}\zeta\otimes\zeta\,\widehat f(\zeta)\dd\zeta),\quad E\in G(n,m).
\end{align*}
\end{lem}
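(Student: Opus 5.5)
We compute the Hessian of $P_{f,E}$ at the origin through the slice-projection theorem \eqref{Sp}, and then argue continuity separately. Since $P_{f,E}\in\Sch(E^\perp)$ and $\widehat{P_{f,E}}=\widehat f|_{E^\perp}$, Fourier inversion on the $(n-m)$-dimensional space $E^\perp$ gives
\[
P_{f,E}(y)=(2\pi)^{-(n-m)}\int_{E^\perp}\widehat f(\zeta)e^{i\langle y,\zeta\rangle}\dd\zeta,\quad y\in E^\perp .
\]
Because $\widehat f$ is Schwartz, its restriction to $E^\perp$ decays rapidly, so differentiation under the integral sign is justified. Differentiating twice in $y\in E^\perp$ gives
\[
-D^2P_{f,E}(0)=(2\pi)^{-(n-m)}\int_{E^\perp}\zeta\otimes\zeta\,\widehat f(\zeta)\dd\zeta,
\]
regarded as a symmetric bilinear form on $E^\perp$, or as an $(n-m)\times(n-m)$ matrix in any orthonormal basis of $E^\perp$. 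Taking the determinant of an $(n-m)\times(n-m)$ matrix pulls out the scalar to the power $n-m$, which yields exactly the factor $(2\pi)^{-(n-m)^2}$. The determinant does not depend on the orthonormal basis chosen, since a change of orthonormal basis conjugates the matrix by an orthogonal matrix. So $w_f(E)$ is well defined and equals the stated formula.

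For continuity, we use the characterization in Section~\ref{2.1}: fix $E_0\in G(n,m)$ and an orthonormal basis $e_1,\dots,e_{n-m}$ of $E_0^\perp$. For $U\in{\rm O}(n)$, the vectors $Ue_1,\dots,Ue_{n-m}$ form an orthonormal basis of $(UE_0)^\perp$. Parametrizing $\zeta=\sum_j s_jUe_j$ with $s\in\mathbb{R}^{n-m}$, the $(j,k)$ matrix entry becomes
\[
\int_{\mathbb{R}^{n-m}} s_js_k\,\widehat f\Bigl(U\sum_l s_le_l\Bigr)\dd s .
\]
We show this is continuous in $U$ by dominated convergence. The integrand is continuous in $U$. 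It is dominated by $|s|^2\sup_{|\xi|=|s|}|\widehat f(\xi)|\le C_N|s|^2(1+|s|)^{-N}$, because $|U\sum_l s_le_l|=|s|$. This bound is integrable for $N>n-m+2$. Hence each entry is continuous in $U$, so the determinant is too, and therefore $U\mapsto w_f(UE_0)$ is continuous on ${\rm O}(n)$.

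The only delicate point is this uniform domination in the continuity step. It is handled by the rotation invariance of $|\zeta|$ together with the Schwartz decay of $\widehat f$. All other steps are routine.
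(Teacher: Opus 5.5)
Your proposal is correct and follows essentially the same route as the paper: Fourier inversion on $E^\perp$ via the slice-projection theorem gives the Hessian formula, and continuity comes from dominated convergence for $U\mapsto w_f(UE_0)$, using $|U\zeta|=|\zeta|$ and the Schwartz decay of $\widehat f$. Your orthonormal-frame parametrization $\zeta=\sum_j s_jUe_j$ is just the paper's change of variables $\zeta\mapsto U\zeta$ written in coordinates. You also make explicit two points the paper leaves implicit: differentiation under the integral sign and basis-independence of the determinant.
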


\begin{proof}
Since $f\in\Sch(\r)$, by the  Fourier inversion theorem and formula \eqref{Sp}, we have 
\begin{align*}
    P_{f,E}(y)=(2\pi)^{-(n-m)}\int_{E^{\perp}}\widehat{ P_{f,E}}(\zeta)e^{i\langle x,\zeta\rangle}\dd\zeta=(2\pi)^{-(n-m)}\int_{E^{\perp}}\widehat f(\zeta)e^{i\langle x,\zeta\rangle}\dd\zeta,\quad y\in E^{\perp}.
\end{align*}
Note that the dimension of $E^{\perp}$ is $n-m$. It follows that
\begin{align*}
    w_f(E)=\det (-D^2P_{f,E}(0))=(2\pi)^{-(n-m)^2}\det(\int_{E^{\perp}}\zeta\otimes\zeta\,\widehat f(\zeta)\dd\zeta).
\end{align*}

 Fix $E_0\in G(n,m)$. For all $ \thinspace U\in {\rm O}(n)$, we have  
\[
w_f(UE_0)
=
(2\pi)^{-(n-m)^2}
\det(\int_{E_0^\perp}
\zeta\otimes\zeta\,\widehat f(U\zeta)\dd\zeta).
\]

If $U_j\to U$ in ${\rm O}(n)$, then
$\widehat f(U_j\zeta)\to\widehat f(U\zeta)$ for $\zeta\in\r$ since $\widehat{f}\in\Sch(\r)$. By $\widehat{f}\in\Sch(\r)$ again, there exists $c>0$ such that
\begin{align*}
    |\zeta|^2|\widehat f(U_j\zeta)|
\leq c|\zeta|^2(1+|U_j\zeta|)^{-(n+9)}= c|\zeta|^2(1+|\zeta|)^{-(n+9)},\quad \zeta\in\r.
\end{align*}
 Since  $c\int_{E_0^{\perp}}|\zeta|^2(1+|\zeta|)^{-(n+9)}\dd\zeta<\infty$,  the Lebesgue dominated convergence theorem gives that $w_f(U_jE_0)\to w_f(UE_0)$, as $j\to\infty$. So,
$w_f$ is continuous on $G(n,m)$.
\end{proof}

\begin{thm}
\label{determinantal-Crofton}
Let $n\geq 4$. For every $f\in\mathcal S(\mathbb R^n)$, we have
\begin{align}
\label{wf}
    (\mathrm{R}_3^t w_f)(u)
=
(2\pi)^{n-3}
(\int_{\mathbb R}f(tu)\dd t)^{n-3},
\quad u\in \s.
\end{align}
If $f\in\mathcal S(\mathbb R^n)$ is  nonnegative, even
and log-concave, then $w_f(E)\geq 0 $ for all $ E\in G(n,3)$.
\end{thm}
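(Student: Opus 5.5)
The plan is to prove the identity \eqref{wf} by expanding the determinant in Lemma \ref{continuity-density} with the Andr\'eief identity, and then averaging over $E\ni u$ with the Blaschke--Petkantschin formula. The key observation is that for $m=3$ the Jacobian power in Theorem \ref{oushi} exactly cancels the squared Gram determinant produced by Corollary \ref{Andreief lemma}. Positivity of $w_f$ will be proved separately from log-concavity via Pr\'ekopa's theorem.

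\emph{Step 1 (Andr\'eief expansion).} Put $d=n-3=\dim E^\perp$. Apply Corollary \ref{Andreief lemma} inside the $d$-dimensional space $E^\perp$ with $\phi=\widehat f|_{E^\perp}$; the moment hypothesis holds since $\widehat f\in\Sch(\r)$. Combined with Lemma \ref{continuity-density} this gives
\begin{align*}
w_f(E)=\frac{(2\pi)^{-d^2}}{d!}\int_{(E^\perp)^d}\Delta_d(\zeta_1,\ldots,\zeta_d)^2\prod_{j=1}^d\widehat f(\zeta_j)\dd\zeta_1\cdots\dd\zeta_d .
\end{align*}

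\emph{Step 2 (dual transform as a Grassmannian average).} By \eqref{dual}, $(\R_3^tw_f)(u)$ is the average of $w_f(E)$ over $3$-planes $E\ni u$ with respect to Haar probability. The map $E\mapsto E^\perp$ sends these planes bijectively and equivariantly onto $G(u^\perp,d)$, the $d$-dimensional subspaces of the $(n-1)$-dimensional space $u^\perp$. Hence $(\R_3^tw_f)(u)$ is the Haar average over $F\in G(u^\perp,d)$ of the integral in Step 1, with $E^\perp$ replaced by $F$.

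\emph{Step 3 (Blaschke--Petkantschin).} Apply Theorem \ref{oushi} in $u^\perp\cong\mathbb R^{n-1}$ with $k=m=d$. The weight there is $\Delta_d^{(n-1)-d}=\Delta_d^2$, which is precisely the factor from Step 1; this cancellation is where $m=3$ is used. Taking $h(\zeta_1,\ldots,\zeta_d)=\prod_j\widehat f(\zeta_j)$, which is integrable since $\widehat f$ is Schwartz, we obtain
\begin{align*}
(\R_3^tw_f)(u)=\frac{(2\pi)^{-d^2}}{d!}\frac{s_{d,d}}{s_{n-1,d}}\Big(\int_{u^\perp}\widehat f(\zeta)\dd\zeta\Big)^d=\frac{(2\pi)^{-d^2+(n-1)d}}{d!}\frac{s_{d,d}}{s_{n-1,d}}\Big(\int_{\mathbb R}f(tu)\dd t\Big)^d,
\end{align*}
where the second equality uses \eqref{Fourier}.

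\emph{Step 4 (the constant).} Since $n-1=d+2$, we have
\begin{align*}
\frac{s_{d,d}}{s_{n-1,d}}=\pi^{-d}\,\frac{\Gamma_d(\frac d2+1)}{\Gamma_d(\frac d2)}=\pi^{-d}\prod_{j=0}^{d-1}\frac{d-j}{2}=\pi^{-d}\,\frac{d!}{2^d}.
\end{align*}
Also $-d^2+(n-1)d=2d$, so the constant becomes $(2\pi)^{2d}\pi^{-d}2^{-d}=(2\pi)^d$, which is \eqref{wf}. I expect this bookkeeping of normalizations to be the only delicate point. One must check that the Haar probability $\nu_3$ in \eqref{dual} corresponds exactly to the $\dd E$ normalization in Theorem \ref{oushi} applied on $G(u^\perp,d)$.

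\emph{Step 5 (positivity).} Let $f\ge0$ be even and log-concave. By Theorem \ref{log-concave}(1), applied with $A=E$, the function $P=P_{f,E}$ is log-concave on $E^\perp$. It is even, and it is smooth since it is Schwartz.
\begin{itemize}
\item If $P(0)=0$, then evenness and log-concavity give $P(y)^2\le P(y)P(-y)\le P(0)^2=0$ for all $y$. So $P\equiv0$ and $w_f(E)=0$.
\item If $P(0)>0$, then $\log P$ is concave near $0$ and attains its maximum at $0$, so $\nabla P(0)=0$. Hence $D^2\log P(0)=D^2P(0)/P(0)$ is negative semidefinite. Then $-D^2P(0)$ is positive semidefinite, and its determinant $w_f(E)$ is $\ge0$.
\end{itemize}
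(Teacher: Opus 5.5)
Your proof is correct and follows the paper's argument essentially step for step: the Andr\'eief expansion of $w_f(E)$ via Lemma \ref{continuity-density} and Corollary \ref{Andreief lemma}, the identification of $\{E\ni u\}$ with $G(u^\perp,n-3)$, Blaschke--Petkantschin in $u^\perp$ with $k=m=n-3$ so that the factor $\Delta_{n-3}^{2}$ matches, then \eqref{Fourier} and the same constant computation. Your positivity step differs only cosmetically: you pass through $\log P_{f,E}$ with a case split on $P_{f,E}(0)$, whereas the paper uses directly that $P_{f,E}$ is maximized at the origin, which follows from $P(0)\ge P(y)^{1/2}P(-y)^{1/2}$ and needs no case split.
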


\begin{proof}
\textbf{Step~1.} Prove  formula \eqref{wf}.

  Since $\int_{E^{\perp}}|\zeta|^2|\widehat f|\dd\zeta<\infty$ by $\widehat f\in \Sch(\r)$,  from Lemma \ref{continuity-density}  and Corollary \ref{Andreief lemma}, we have
\[
\begin{aligned}
  &\quad\enspace w_f(E)=(2\pi)^{-(n-3)^2}\text{det}(\int_{E^{\perp}}\zeta\otimes\zeta\,\widehat f(\zeta)\dd\zeta)
  \\
  &=\frac{(2\pi)^{-(n-3)^2}}{(n-3)!}
  \int_{(E^\perp)^{(n-3)}}
  \Delta_{n-3}(\zeta_1,\ldots,\zeta_{n-3})^2
  \mathsmaller{\prod}\limits_{j=1}^{n-3}\widehat f(\zeta_j)\dd\zeta_1\cdots \dd\zeta_{n-3},\enspace E\in G(n,3).
\end{aligned}
\]

Thus, from formula \eqref{dual}, Theorem \ref{oushi} (let $m=k=n-3$), the Fubini theorem and formula \eqref{Fourier}, it follows that for all $u\in\s$,
\begin{align*}
    &\quad \enspace(\mathrm{R}_3^tw_f)(u)=\int_{u\in E\in G(n,3)}w_f(E)\dd\nu_3(E)
    =\int_{F\in G(u^{\perp},n-3)}w_f(F^{\perp})\dd F 
    \\
    &=\frac{(2\pi)^{-(n-3)^2}}{(n-3)!}\int_{F\in G(u^{\perp},n-3)}
  \int_{(F)^{(n-3)}}
  \Delta_{n-3}(\zeta_1,\ldots,\zeta_{n-3})^2 
  \mathsmaller{\prod}\limits_{j=1}^{n-3}\widehat f(\zeta_j)\dd\zeta_1\cdots \dd\zeta_{n-3}\dd F 
  \\
  &=(2\pi)^{-(n-3)^2}
    \frac{s_{n-3,n-3}}{(n-3)!\,s_{n-1,n-3}}
    \int_{(u^\perp)^{n-3}}
    \mathsmaller{\prod}\limits_{j=1}^{n-3}\widehat f(\zeta_j)
    \dd\zeta_1\cdots \dd\zeta_{n-3} \\
  &=(2\pi)^{-(n-3)^2}
    \frac{s_{n-3,n-3}}{(n-3)!\,s_{n-1,n-3}}
    (\int_{u^\perp}\widehat f(\zeta)\dd\zeta)^{n-3} 
    \\
    &=(2\pi)^{2(n-3)}\frac{s_{n-3,n-3}}{(n-3)!\,s_{n-1,n-3}}(\int_{\mathbb{R}}f(tu)\dd t)^{n-3},  \quad \quad (*)
\end{align*}
where $G(u^{\perp},n-3)$ is the Grassmann manifold of $(n-3)$-dimensional subspaces of $u^{\perp}$ and $\dd F$ is the Haar probability measure on $G(u^{\perp},n-3)$. The second equality holds since for all $u\in \s$, the map $E\longmapsto E^\perp$ 
identifies
$\{E\in G(n,3):u\in E\}$ with $G(u^\perp,n-3)$ and  carries $\dd\nu_3$ to $\dd F$ by the uniqueness
of the rotational-invariant probability measure.

Using  the definition of $s_{n,m}$ and the identity $\Gamma(s+1)=s\Gamma(s),\; s>0$,  we have
\[
\begin{aligned}
\frac{s_{n-3,n-3}}
     {(n-3)!\,s_{n-1,n-3}}
=\frac{\pi^{-(n-3)}}{(n-3)!}\mathsmaller{\prod}\limits_{j=0}^{n-4}
\frac{n-3-j}{2}
=(2\pi)^{-(n-3)}.
\end{aligned}
\]
Therefore, we obtain
\begin{align*}
    (\mathrm{R}_3^tw_f)(u)
  =(2\pi)^{n-3}
  (\int_{\mathbb{R}}f(tu)\dd t)^{n-3},
  \quad u\in\s.
\end{align*}

\textbf{Step~2.} Prove that $w_f(E)\geq0$ for all $E\in G(n,3)$ under the assumption on $f$. 

Assume that $f\in\Sch(\r)$ is  nonnegative, even
and log-concave. Then $P_{f,E}\in \Sch(E^{\perp})$ is  nonnegative and even. Moreover, $P_{f,E}$ is log-concave on $E^{\perp}$ by Theorem \ref{log-concave} (1). Thus, 
\[
  P_{f,E}(0)
  =P_{f,E}(\frac{y+(-y)}{2})
  \geq P_{f,E}(y)^{1/2}P_{f,E}(-y)^{1/2}
  =P_{f,E}(y), \quad \forall \thinspace y\in E^{\perp}.
\]
That is, $P_{f,E}$ attains its  global maximum at the origin.
 
Let $y\in E^\perp$ and $\varphi_y(t)=P_{f,E}(ty),\; t\in\mathbb R.$
Then $\varphi_y$ is twice differentiable and attains its maximum at $t=0$.
So, $\varphi_y''(0)\leq 0, $ and hence $ D^2P_{f,E}(0)[y,y]=\varphi_y''(0)\leq 0$ for all $y\in E^{\perp}$.
Consequently, $D^2P_{f,E}(0)$ is negative semidefinite and  $w_f(E)\geq0$ for all $E\in G(n,3)$.
\end{proof}

\noindent{\bf{Remark 1.}} (i). The above arguments reveal  why the case \(m=3\) is \emph{distinctive}.
For general \(m\), the Andr\'eief identity applied to
\(\det(-D^2P_{f,E}(0))\) produces the factor
\(\Delta_{n-m}^{2}\), while the
Blaschke--Petkantschin formula produces the factor
\(\Delta_{n-m}^{m-1}\).
Thus, the two powers agree exactly only if \(m-1=2\), that is, \(m=3\).

(ii). From a series of equalities in $(*)$ of Step 1, we indeed have 
\begin{align*}
    \int_{F\in G(u^{\perp},n-3)}
  \int_{(F)^{(n-3)}}
  \Delta_{n-3}(\zeta_1,\ldots,\zeta_{n-3})^2 
  \mathsmaller{\prod}\limits_{j=1}^{n-3}g(\zeta_j)\dd\zeta_1\cdots \dd\zeta_{n-3}\dd F =c(n)
    (\int_{u^\perp}g(\zeta)\dd\zeta)^{n-3}, 
\end{align*}
where $g$ is a bounded Borel function on $\r$ with  compact support. Specifically, if $g=\mathbf{1}_L$ where $L$ is  an origin-symmetric
star body, then the formula  corresponds to Lemma 6.1 in \cite{GZ} of Grinberg and Zhang for $i=3$.

\begin{thm}
\label{positive-representation}
If $K$ is an origin-symmetric convex body 
in $\r$ with $n\geq 4$, then there exists a positive Borel measure
$\mu_K$ on $G(n,3)$ such that $\mathrm{R}_3^t\mu_K=\rho_K^{n-3}\dd\sigma.$ 
\end{thm}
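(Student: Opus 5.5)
We follow the outline in the introduction. Fix $\varepsilon>0$ and set $f_\varepsilon=\mathbf 1_K*\gamma_\varepsilon$.

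\textbf{Step 1: $f_\varepsilon$ is admissible.} The function $f_\varepsilon$ is nonnegative and even, because both $\mathbf 1_K$ and $\gamma_\varepsilon$ are. It is log-concave by Theorem \ref{log-concave} (2). It is a Schwartz function: $\gamma_\varepsilon$ is Schwartz and $\mathbf 1_K$ is bounded with compact support, so every derivative $\partial^\alpha f_\varepsilon=\mathbf 1_K*\partial^\alpha\gamma_\varepsilon$ decays like a Gaussian.

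\textbf{Step 2: positive approximating measures.} By Theorem \ref{determinantal-Crofton}, $w_{f_\varepsilon}\ge0$, and by Lemma \ref{continuity-density} it is continuous. Hence $\mu_\varepsilon:=(2\pi)^{-(n-3)}w_{f_\varepsilon}(E)\dd E$ is a positive Borel measure on $G(n,3)$. Formula \eqref{wf} gives
\[
\mathrm R_3^t\mu_\varepsilon=g_\varepsilon^{\,n-3}\dd\sigma,\qquad g_\varepsilon(u)=\int_{\mathbb R}f_\varepsilon(tu)\dd t .
\]
As a check, testing against $1$ shows that $\mu_\varepsilon(G(n,3))=\int_{\s}g_\varepsilon^{n-3}\dd\sigma$.

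\textbf{Step 3: convergence of the densities.} We show $g_\varepsilon\to 2\rho_K$ uniformly on $\s$ as $\varepsilon\to0^+$. Pointwise convergence holds because $f_\varepsilon\to\mathbf 1_K$ at every point off $\partial K$. For $t\neq0$, the line $\{tu\}$ meets $\partial K$ in exactly two points, since $0\in\operatorname{int}K$. Moreover $0\le f_\varepsilon\le1$, and $f_\varepsilon(tu)$ decays in a uniform way for $|t|$ larger than $\max\rho_K+1$. Together these give $\int_{\mathbb R}f_\varepsilon(tu)\dd t\to\int_{\mathbb R}\mathbf 1_K(tu)\dd t=2\rho_K(u)$.

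Uniformity in $u$ is the delicate point. One route is Dini's theorem along a monotone subsequence, but monotonicity in $\varepsilon$ is not obvious. A more robust route is direct: use $rK\subset K+\varepsilon B_n$ and $K\ominus\varepsilon B_n\supset sK$ with $r,s\to1$, together with $\varepsilon B_n$-tube estimates, to get $|g_\varepsilon(u)-2\rho_K(u)|\le C\varepsilon^{1/2}$ uniformly in $u$. Here we use that $K$ contains a ball $aB_n$ and lies in $bB_n$. In fact only weak-$*$ convergence of $g_\varepsilon^{n-3}\dd\sigma$ to $(2\rho_K)^{n-3}\dd\sigma$ is needed, and this follows from bounded pointwise convergence via dominated convergence. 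So uniform convergence is not actually necessary.

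\textbf{Step 4: extracting the limit measure.} By Step 3 the total masses $\mu_\varepsilon(G(n,3))=\int_{\s} g_\varepsilon^{n-3}\dd\sigma$ are uniformly bounded. Since $G(n,3)$ is compact, Banach--Alaoglu (Prokhorov) gives a sequence $\varepsilon_j\to0$ with $\mu_{\varepsilon_j}\to\mu$ weak-$*$, where $\mu$ is a positive measure. For $h\in C(\s)$ the function $\mathrm R_3h$ is continuous on $G(n,3)$; this follows from the same argument as in Lemma \ref{continuity-density}, via the action of ${\rm O}(n)$. Hence
\[
\int_{G(n,3)}\mathrm R_3h\,\dd\mu=\lim_{j\to\infty}\int_{G(n,3)}\mathrm R_3h\,\dd\mu_{\varepsilon_j}=\lim_{j\to\infty}\int_{\s}h\,g_{\varepsilon_j}^{n-3}\dd\sigma=2^{n-3}\int_{\s}h\,\rho_K^{n-3}\dd\sigma .
\]
Setting $\mu_K=2^{-(n-3)}\mu$ gives $\mathrm R_3^t\mu_K=\rho_K^{n-3}\dd\sigma$ with $\mu_K$ positive.

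The main technical point is Step 3: controlling $g_\varepsilon$ uniformly and passing to the limit in $g_\varepsilon^{n-3}$. Bounded convergence resolves it, since $0\le g_\varepsilon\le\int_{\mathbb R}(\mathbf 1_{bB_n}*\gamma_\varepsilon)(tu)\dd t$, which is bounded uniformly for $\varepsilon\le1$.
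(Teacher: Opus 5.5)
Your proof is correct and has the same architecture as the paper's proof. Both use the regularization $f_\varepsilon=\mathbf 1_K*\gamma_\varepsilon$, positivity and the transform identity from Theorem \ref{determinantal-Crofton}, a uniform mass bound, and a weak-$*$ limit. Your normalization $(2\pi)^{-(n-3)}$ followed by the factor $2^{-(n-3)}$ matches the paper's $(4\pi)^{3-n}$.

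The one real difference is how you get convergence of the densities.

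The paper writes $\int_{\mathbb R}f_\varepsilon(tu)\dd t=\int_{u^\perp}P_{K,\mathbb Ru}(y)\gamma_{\varepsilon,u^\perp}(y)\dd y$. It then uses Brunn concavity of the chord-length function $P_{K,\mathbb Ru}$, which is maximal at $0$, to obtain $0\le 2\rho_K(u)-g_\varepsilon(u)\le C\varepsilon+(\text{Gaussian tail})$ uniformly in $u$. This gives a rate, and the one-sided inequality $g_\varepsilon\le 2\rho_K$ comes with it.

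You instead combine four ingredients:
\begin{itemize}
\item pointwise convergence of $f_\varepsilon$ off $\partial K$;
\item the fact that $\{t:tu\in\partial K\}=\{\pm\rho_K(u)\}$ is a null set;
\item a Gaussian-tail dominating function for $\varepsilon\le1$;
\item the bound $g_\varepsilon\le 2b$.
\end{itemize}
Dominated convergence then gives $\int_{\s}h\,g_\varepsilon^{n-3}\dd\sigma\to 2^{n-3}\int_{\s}h\,\rho_K^{n-3}\dd\sigma$ for every $h\in C(\s)$. Together with the mass bound, this is all your Step 4 uses.

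Your route is shorter and purely qualitative. The paper's uniform estimate is quantitative, but the theorem does not need that extra information.

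Your aside about a $C\varepsilon^{1/2}$ bound is only sketched, and the inclusion $rK\subset K+\varepsilon B_n$ points the wrong way for an upper bound on $g_\varepsilon$. You explicitly do not rely on it, so nothing is lost.
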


\begin{proof}
For $\varepsilon>0$, let $\gamma_\varepsilon(x)
  =(2\pi\varepsilon^2)^{-n/2}
  e^{-|x|^2/(2\varepsilon^2)},\; x\in\r$; and $
  f_\varepsilon=\mathbf{1}_K*\gamma_\varepsilon.$
Then  $f_\varepsilon\in\Sch(\r)$ is  nonnegative and even. Since $\mathbf{1}_K$ and $\gamma_\varepsilon$ are log-concave in $\r$, $f_{\varepsilon}$ is log-concave by Theorem \ref{log-concave} (2).

Let $w_\varepsilon=w_{f_\varepsilon}$ and $\dd\mu_\varepsilon(E)
  =(4\pi)^{3-n}w_\varepsilon(E)\dd E$ for $\varepsilon>0$.
From Lemma \ref{continuity-density} and Theorem
\ref{determinantal-Crofton}, it follows that $\mu_\varepsilon$ is a positive Borel measure
and
\[
  \mathrm{R}_3^t\mu_\varepsilon
  =(\frac12\int_{\mathbb{R}}f_\varepsilon(tu)\dd t)^{n-3}\dd\sigma(u):= h_{\varepsilon}(u)\dd\sigma(u).
\]
In the following, we divide the proof into two steps.

\textbf{Step~1.} Prove that $ \sup\limits_{u\in\s}
  |h_\varepsilon(u)-\rho_K^{n-3}(u)|\to 0$, as $\varepsilon\to 0^+$.

Since $K$ is origin-symmetric, it follows that  $P_{K,\mathbb Ru}(y)=\V_1(K\cap(\mathbb Ru+y)),\; y\in u^{\perp}$, is even and  $P_{K,\mathbb Ru}(0)=2\rho_K(u),\; u\in\s$. 
By the Brunn concavity theorem, $P_{K,\mathbb Ru}$ is concave on its
support. Hence, we have $P_{K,\mathbb Ru}(0)\geq P_{K,\mathbb Ru}(y),\; \forall\; y\in u^\perp.$

Assume $0<r\leq R<\infty$ such that $rB_n\subset K\subset RB_n$. For $y\in u^{\perp}$ with
$0<|y|\leq r/2$, let $z=ry/(2|y|)$ and $\lambda=2|y|/r$. 
Since $z\in rB_n\subset K$, the point $z$ lies in the support of $P_{K,\mathbb Ru}$. By the concavity of $P_{K,\mathbb{R}u}$, it follows that
\[
\begin{aligned}
  P_{K,\mathbb Ru}(y)
  =
  P_{K,\mathbb Ru}((1-\lambda)0+\lambda z) \geq
  (1-\lambda)P_{K,\mathbb Ru}(0)
  +\lambda P_{K,\mathbb Ru}(z) \geq
  (1-\lambda)P_{K,\mathbb Ru}(0).
\end{aligned}
\]
Added with  $\lambda=2|y|/r$ and $K\subset RB_n$, we have that for $y\in u^{\perp}$ with
$0<|y|\leq r/2$,
\[
  P_{K,\mathbb Ru}(0)-P_{K,\mathbb Ru}(y)
  \leq
  \lambda P_{K,\mathbb Ru}(0)
  \leq
  \frac{4R}{r}|y|.
\]

Let $\gamma_{\varepsilon,u^\perp}(y)=(2\pi \varepsilon^2)^{-(n-1)/2}e^{-|y|^2/(2\varepsilon^2)},\; y\in u^{\perp}$. By the Fubini theorem, it follows that
\begin{align*}
    &\quad\enspace\int_{\mathbb{R}}f_\varepsilon(tu)\dd t=\int_{\mathbb{R}}\int_{\r}\mathbf{1}_K(tu+x)\gamma_\varepsilon(-x)\dd x\dd t
    \\
    &=\int_{\mathbb{R}}\int_{\mathbb{R}}\int_{u^{\perp}}\mathbf{1}_K(tu+su+y)\gamma_{\varepsilon, u^{\perp}}(y)\cdot \frac{1}{\sqrt{2\pi}\varepsilon}e^{-|s|^2/(2\varepsilon^2)}\dd y\dd s\dd t
  \\
    &=\int_{u^{\perp}}\int_{\mathbb{R}}\mathbf{1}_K(tu+y)\gamma_{\varepsilon, u^{\perp}}(y)\int_{\mathbb{R}} \frac{1}{\sqrt{2\pi}\varepsilon}e^{-|s|^2/(2\varepsilon^2)}\dd s\dd t\dd y
  =\int_{u^\perp}P_{K,\mathbb Ru}(y)\gamma_{\varepsilon, u^{\perp}}(y)\dd y.
\end{align*}
Hence, by $P_{K,\mathbb Ru}(0)\geq P_{K,\mathbb Ru}(y),\; y\in u^\perp$, we obtain
 \begin{align*}
     P_{K,\mathbb Ru}(0)-\int_{\mathbb{R}}f_\varepsilon(tu)\dd t=\int_{u^{\perp}}(P_{K,\mathbb Ru}(0)-P_{K,\mathbb Ru}(y))\gamma_{\varepsilon, u^{\perp}}(y)\dd y\geq0.
 \end{align*}
 
 Meanwhile, from that  $P_{K,\mathbb Ru}(0)-P_{K,\mathbb Ru}(y)
  \leq
  \frac{4R}{r}|y|
  $ for all $y\in u^{\perp}$ with $ |y|\le r/2$, $0\leq P_{K,\mathbb Ru}(y)\leq 2R$ for all $ y\in u^{\perp}$, and $\gamma_{1,u^{\perp}}\in\Sch(u^\perp) $, it follows that
\[
\begin{aligned}
0&\leq P_{K,\mathbb Ru}(0)
-\int_{\mathbb R}f_\varepsilon(tu)\dd t =
\int_{u^\perp}
(P_{K,\mathbb Ru}(0)-P_{K,\mathbb Ru}(y))
\gamma_{\varepsilon,u^\perp}(y)\dd y \\
&\leq
\frac{4R}{r}
\int_{u^\perp}|y|
\gamma_{\varepsilon,u^\perp}(y)\dd y
+
2R
\int_{\{y\in u^\perp:\,|y|>r/2\}}
\gamma_{\varepsilon,u^\perp}(y)\dd y \\
&=
\varepsilon\frac{4R}{r}
\int_{u^{\perp}}|y|\gamma_{1,u^\perp}(y)\dd y
+
2R
\int_{\{y\in u^{\perp}:|y|>r/(2\varepsilon)\}}
\gamma_{1,u^\perp}(y)\dd y
\to 0,\quad \text{as} \enspace \varepsilon\to 0^+.
\end{aligned}
\]
So,  $\sup\limits_{u\in\s}
  |P_{K,\mathbb Ru}(0)
-\int_{\mathbb R}f_\varepsilon(tu)\dd t|\to0$, as $\varepsilon\to 0^+$. 
Since $P_{K,\mathbb{R}u}(0)=2\rho_K(u)$, it follows that as $\varepsilon\to 0^+$,
  $\sup\limits_{u\in\s}
  |\frac12\int_{\mathbb{R}}f_\varepsilon(tu)\dd t
  -\rho_K(u)|\to0$,   and hence $ \sup\limits_{u\in\s}
  |h_\varepsilon(u)-\rho_K^{n-3}(u)|\to 0$.

\textbf{Step~2.} Prove that there exists a positive Borel measure
$\mu_K$ such that $\mathrm{R}_3^t\mu_K=\rho_K^{n-3}\dd\sigma$.

By the Riesz representation theorem (see \cite[Theorem 4.1.1]{AK}) and the fact that $G(n,3)$ is a compact Hausdorff
space, it is known that
$(C(G(n,3)))^*=\mathcal{M}(G(n,3))$.  

To prove the existence of $\mu_K$, it suffices to show that there exists a sequence  $\{\varepsilon_j\}_{j=1}^{\infty}$ such that $\varepsilon_j\to 0^+$ and $\mu_{\varepsilon_j}\stackrel{w^*}{\rightharpoonup}\mu_K$. If so, then  $\mu_K$ is positive since
$\int g\dd\mu_K=\lim_j\int g\dd\mu_{\varepsilon_j}\geq0$ for all nonnegative
continuous $g\in C(G(n,3))$; Moreover, for every $\varphi\in C(\s)$, 
\[
\begin{aligned}
  \int_{\s}\varphi\rho_K^{n-3}\dd\sigma
  &=\lim_{j\to\infty}\int_{\s}\varphi h_{\varepsilon_j}\dd\sigma=\lim_{j\to\infty}\int_{G(n,3)}
    (\mathrm{R}_3\varphi)\dd\mu_{\varepsilon_j}=\int_{G(n,3)}(\mathrm{R}_3\varphi)\dd\mu_K,
\end{aligned}
\]
which yields that $\mathrm{R}_3^t\mu_K=\rho_K^{n-3}\dd\sigma$.

Note that $C(G(n,3))$ is separable by \cite[Theorem 4.1.3]{AK} and the fact that $G(n,3)$ is a compact Hausdorff  metrizable space. Meanwhile, from $ \sup\limits_{u\in\s}
  |h_\varepsilon(u)-\rho_K^{n-3}(u)|\to 0$ as $\varepsilon\to 0^+$   and $\mathrm{R}_3^t\mu_\varepsilon
  =h_{\varepsilon}(u)\dd\sigma(u)$, there exist $\varepsilon_0>0$ and $M>0$ such that
\[
\mu_\varepsilon(G(n,3))=\int_{G(n,3)}\mathrm{R}_3(1)\dd\mu_\varepsilon
  =\int_{\s}h_\varepsilon\dd\sigma<M,\quad 0< \varepsilon<\varepsilon_0.
\]
By the Banach--Alaoglu theorem  (see \cite[Theorem 3.17]{RudinFA}),  a sequence $\{\varepsilon_j\}_{j=1}^{\infty}$ exists. 
\end{proof}

\noindent{\bf{Remark 2.}} (i). If in addition the boundary $\partial K$ 
 is of class $C^{\infty}$, we can show that $w_\varepsilon(E)$ converges to $\det(-D^2 P_{K,E}(0))$ uniformly  in $E\in G(n,3)$ as $\varepsilon\to0^+$, and therefore obtain
\begin{align*}
     \mu_K=(4\pi)^{3-n}\det(-D^2 P_{K,E}(0))\dd E.
\end{align*}
Since its proof is tedious, we would not dwell on it here.

(ii). For $n=4$,
 Zhang \cite[Lemma 1]{Zhang} established the above formula for $\mu_K$ when the boundary $\partial K$ is of class $C^{2}$.

\begin{proof}[\bf Proof of Theorem \ref{1.1}\thinspace\rm:]
From Theorem \ref{positive-representation} together with the definition of $m$-intersection body,  it follows that each origin-symmetric convex body $K$ in $\r$, $n\geq 4$, is a $3$-intersection body. Added with  Theorem \ref{m-interesection}, we conclude the proof of the case $m=3$.

  Assume $m=2$ and fix $H\in G(n,3)$. For every
$E\in G(n,2)$ with $E\subset H$, it follows that $\V_2((K\cap H)\cap E)
  \leq\V_2((L\cap H)\cap E).$
By the affirmative answer to the classical Busemann--Petty problem in $\mathbb{R}^3$
\cite{Gardner1}, it follows that $\V_3(K\cap H)\leq\V_3(L\cap H).$
Since this holds for every $H\in G(n,3)$,  by the affirmative answer to the generalized Busemann--Petty problem for $m=3$, we obtain that $\V_n(K)\leq\V_n(L)$.
\end{proof}
\vskip 5pt

\noindent\textbf{Declaration of competing interest}: We declare that we have no
conflict of interest.

\noindent\textbf{Data availability}: Not applicable.

\end{CJK*}

\end{document}